\numberwithin{equation}{section}
\newtheorem{theorem}{Theorem}[section]
\newtheorem{lemma}[theorem]{Lemma}
\newtheorem{proposition}[theorem]{Proposition}
\theoremstyle{definition}
\newtheorem{remark}[theorem]{Remark}
\newtheorem{definition}[theorem]{Definition}
\newtheorem{example}[theorem]{Example}
\theoremstyle{remark}
\newcommand{\cA}{{\mathcal A}}
\newcommand{\cB}{{\mathcal B}}
\newcommand{\cC}{{\mathcal C}}
\newcommand{\cD}{{\mathcal D}}
\newcommand{\cM}{{\mathcal M}}
\newcommand{\cQ}{{\mathcal Q}}
\newcommand{\cZ}{{\mathcal Z}}
\newcommand{\M}{{\mathbb M}}
\newcommand{\N}{{\mathbb N}}
\newcommand{\R}{{\mathbb R}}
\newcommand{\Z}{{\mathbb Z}}
\def\al{\alpha}
\def\eps{\varepsilon}
\def\lm{\lambda}
\def\Lm{\Lambda}
\def\sg{\sigma}
\def\0{\emptyset}
\def\6{\partial}
\def\8{\infty}
\def\l{\left}
\def\r{\right}
\def\wt{\widetilde}
\def\wht{\widehat}
\def\supp{\,{\rm supp}\,}
\def\sgn{\,{\rm sgn}\,}
\begin{document}

\title[The Fatou property of block spaces]{The Fatou property of block spaces}

\author[Y.~Sawano]{Yoshihiro Sawano}
\address{Department of Mathematics and Information Sciences, Tokyo Metropolitan University, 1-1 Minami Ohsawa, Hachioji, Tokyo 192-0397, Japan}
\email{ysawano@tmu.ac.jp}

\author[H.~Tanaka]{Hitoshi Tanaka}
\address{Graduate School of Mathematical Sciences, The University of Tokyo, Tokyo, 153-8914, Japan}
\email{htanaka@ms.u-tokyo.ac.jp}

\thanks{
The first author is supported by 
Grant-in-Aid for Young Scientists (B) (No.~24740085), 
the Japan Society for the Promotion of Science.
The second author is supported by 
the FMSP program at Graduate School of Mathematical Sciences, the University of Tokyo, 
and Grant-in-Aid for Scientific Research (C) (No.~23540187), 
the Japan Society for the Promotion of Science. 
}
\subjclass[2010]{42A45, 42B30.}
\keywords{
associate space;
block space;
Fatou property;
Morrey space;
predual space.
}
\date{}

\begin{abstract}
Around thirty years ago, 
block spaces, 
which are the predual of Morrey spaces, 
had been considered. 
However, 
it seems that there is no proof that 
block spaces satisfy the Fatou property. 
In this paper 
the Fatou property for block spaces is verified and 
the predual of block spaces is characterized. 
\end{abstract}

\maketitle

\section{Introduction}\label{sec1}
The purpose of this paper is to verify the Fatou property for block spaces, 
which in turn yields a characterization for the predual of block spaces. 
The Morrey space $\cM^p_q(\R^n)$
is a properly wider space than 
the Lebesgue space $L^p(\R^n)$ 
when $0<q<p<\8$ and 
this space works well with the fractional integral operators 
(cf. \cite{SaSuTa1,SaSuTa2,SaSuTa3,TaGu}). 
We first recall the definition of Morrey spaces and 
the consider block spaces which are the predual of Morrey spaces. 

\subsection{Morrey spaces.} 
Let $0<q\le p<\8$ be two real parameters. 
For $f\in L^q_{{\rm loc}}(\R^n)$, define 
\begin{align*}
\|f\|_{\cM^p_q(\R^n)}
:=
\sup_{Q\in\cQ}
|Q|^{\frac1p-\frac1q}
\l(\int_{Q}|f(x)|^q\,dx\r)^{\frac1q}
=
\sup_{Q\in\cQ}
|Q|^{\frac1p}
\l(\frac1{|Q|}\int_{Q}|f(x)|^q\,dx\r)^{\frac1q},
\end{align*}
where we have used the notation 
$\cQ$ to denote the family of all cubes in $\R^n$ 
with sides parallel to the coordinate axes and 
$|Q|$ to denote the volume of $Q$. 
The Morrey space $\cM^p_q(\R^n)$ 
is defined to be the subset of all $L^q$ locally integrable functions $f$ on $\R^n$ for which 
$\|f\|_{\cM^p_q(\R^n)}$
is finite. 
It is easy see that 
$\|\cdot\|_{\cM^p_q(\R^n)}$
becomes the norm if $q\ge 1$ 
and that becomes the quasi norm otherwise. 
Letting
$0<r<q\le p<\8$ and 
using H\"{o}lder's inequality, 
we have
\begin{equation}\label{1.1}
|Q|^{\frac1p}
\l(\frac1{|Q|}\int_{Q}|f(x)|^r\,dx\r)^{\frac1r}
\le
|Q|^{\frac1p}
\l(\frac1{|Q|}\int_{Q}|f(x)|^q\,dx\r)^{\frac1q}
\end{equation}
and hence
$$
\|f\|_{\cM^p_q(\R^n)}
\ge
\|f\|_{\cM^p_r(\R^n)}.
$$
This tells us that
\begin{equation}\label{1.2}
L^p(\R^n)=\cM^p_p(\R^n)
\subset\cM^p_q(\R^n)
\subset\cM^p_r(\R^n)
\text{ when }
p\ge q>r>0.
\end{equation}
If we let $f(x)=|x|^{-n/p}$, 
then the cube 
$R=(-t/2,t/2)^n$, $t>0$, 
attains its Morrey-norm. 
In fact, if $0<q<p<\8$, then  
$$
\sup_{Q\in\cQ}
|Q|^{\frac1p-\frac1q}
\l(\int_{Q}\frac1{|x|^{nq/p}}\,dx\r)^{\frac1q}
\le
|R|^{\frac1p-\frac1q}
\l(\int_{R}\frac1{|x|^{nq/p}}\,dx\r)^{\frac1q}
=
O\l((n(1-q/p))^{-1/q}\r)
$$
and that $f$ belongs to $\cM^p_q(\R^n)$. 
Because then $f$ does not belong to $L^p(\R^n)$, 
we see that 
the Morrey space $\cM^p_q(\R^n)$
is properly wider than 
the Lebesgue space $L^p(\R^n)$. 
The completeness of Morrey spaces 
follows easily by that of Lebesgue spaces. 

If the sequence of nonnegative functions 
$\{f_k\}_{k=1}^{\8} \subset\cM^p_q(\R^n)$
satisfies 
$f_k(x)\uparrow f(x)$, (a.e. $x\in\R^n$), 
then we have 
\begin{equation}\label{1.3}
\|f_k\|_{\cM^p_q(\R^n)}
\uparrow
\|f\|_{\cM^p_q(\R^n)}
\end{equation}
from the definition of the Morrey norm 
$\|\cdot\|_{\cM^p_q(\R^n)}$.
However, 
the following property is different from that of Lebesgue spaces. 

For any measurable set $E\subset\R^n$ 
such that $|E|<\8$ and 
any $f\in L^p(\R^n)$, 
we have by H\"{o}lder's inequality 
$$
\l|\int_{E}f(x)\,dx\r|
\le
\int_{E}|f(x)|\,dx
=
\int_{\R^n}\chi_{E}(x)|f(x)|\,dx
\le
|E|^{\frac1{p'}}
\|f\|_{L^p(\R^n)}<\8,
$$
where $p'$ is the conjugate number defined by $1/p+1/p'=1$ and 
$\chi_{E}$ stands for the characteristic function of $E$. 
While, if 
$f\in\cM^p_q(\R^n)$
then it follows from  the definition of Morrey-norm that 
\begin{align*}
\int_{Q}|f(x)|\,dx
&=
|Q|
\l(\frac1{|Q|}\int_{Q}|f(x)|\,dx\r)
\\ &\le
|Q|
\l(\frac1{|Q|}\int_{Q}|f(x)|^q\,dx\r)^{\frac1q}
\\ &=
|Q|^{1-\frac1p}
\cdot
|Q|^{\frac1p}
\l(\frac1{|Q|}\int_{Q}|f(x)|^q\,dx\r)^{\frac1q}
\\ &\le
\|f\|_{\cM^p_q(\R^n)}
|Q|^{\frac1{p'}}.
\end{align*}
This implies that for any family of 
counterable open cubes $\{Q_j\}_{j=1}^{\8}$ 
such that 
$E\subset\bigcup_jQ_j$, 
we have 
\begin{equation}\label{1.4}
\l|\int_{E}f(x)\,dx\r|
\le
\int_{E}|f(x)|\,dx
\le
\sum_j\int_{Q_j}|f(x)|\,dx
\le
\|f\|_{\cM^p_q(\R^n)}
\sum_j|Q_j|^{\frac1{p'}}.
\end{equation}
In general, 
for two real parameters 
$0<r\le\8$ and $0<d\le1$, 
the Hausdorff capacity or the Hausdorff content
of the set $E$ is defined by 
$$
H^d_r(E)
:=
\inf\sum_j|Q_j|^d,
$$
where the infimum is taken over all counterable cubes $\{Q_j\}_{j=1}^{\8}$ 
which cover $E$ with the side-length less than $r$.
Using this definition, 
we have by \eqref{1.4}
\begin{equation}\label{1.5}
\l|\int_{E}f(x)\,dx\r|
\le
H^{1/p'}_{\8}(E)
\|f\|_{\cM^p_q(\R^n)}.
\end{equation}
Of course, 
$|E|<\8$
does not always imply 
$H^{1/p'}_{\8}(E)<\8$. 
Thus, 
we cannot conclude  
from the fact that $|E|<\8$
that the left-hand side of this inequality 
is finite.

\subsection{Block spaces.} 
We shall define block spaces following \cite{BlRuVe}. 
Let $1<q\le p<\8$. 
We say that a function $b$ on $\R^n$ 
is a $(p',q')$-block provided that 
$b$ is supported on a cube $Q\in\cQ$ and 
satisfies 
\begin{equation}\label{1.6}
\l(\int_{Q}|b(x)|^{q'}\,dx\r)^{\frac1{q'}}
\le
|Q|^{\frac1p-\frac1q}.
\end{equation}
The space $\cB^{p'}_{q'}(\R^n)$ 
is defined by the set of all functions $f$ locally in $L^{q'}(\R^n)$ 
with the norm 
$$
\|f\|_{\cB^{p'}_{q'}(\R^n)}
:=
\inf\l\{
\|\{\lm_k\}\|_{l^1}:\,
f=\sum_k\lm_kb_k
\r\}<\8,
$$
where 
$\|\{\lm_k\}_{k=1}^{\8}\|_{l^1}
=
\sum_k|\lm_k|<\8$ 
and $b_k$ is a $(p',q')$-block, 
and the infimum is taken over all possible decompositions of $f$. 
By the definition of the norm 
we see that the inclusion 
\begin{equation}\label{1.7}
L^{p'}(\R^n)=\cB^{p'}_{p'}(\R^n)
\supset
\cB^{p'}_{q'}(\R^n)
\supset
\cB^{p'}_{r'}(\R^n)
\text{ when }
p\ge q>r>1.
\end{equation}
In \cite[Theorem 1]{BlRuVe} and \cite[Proposition 5]{Zo} 
the following was proved. 

\begin{proposition}\label{prop1.1}
Let $1<q\le p<\8$. Then 
the predual space of $\cM^p_q(\R^n)$ is 
$\cB^{p'}_{q'}(\R^n)$ 
in the following sense:

If $g\in\cM^p_q(\R^n)$, then 
$\int_{\R^n}f(x)g(x)\,dx$ 
is an element of 
$\cB^{p'}_{q'}(\R^n)^{*}$. 
Moreover, for any 
$L\in\cB^{p'}_{q'}(\R^n)^{*}$, 
there exists $g\in\cM^p_q(\R^n)$ 
such that 
$$
L(f)=\int_{\R^n}f(x)g(x)\,dx,
\quad
(f\in\cB^{p'}_{q'}(\R^n)).
$$
\end{proposition}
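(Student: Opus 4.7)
The plan is to prove both directions of the duality separately, relying on the block decomposition structure in one direction and on local Riesz representation plus a gluing argument in the other.

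\textbf{Easy direction.} Given $g\in\cM^p_q(\R^n)$, I would first verify the pairing on a single $(p',q')$-block $b$ supported on a cube $Q$. By H\"older's inequality and the block condition \eqref{1.6},
\begin{align*}
\l|\int_{\R^n}b(x)g(x)\,dx\r|
&\le\l(\int_Q|b(x)|^{q'}dx\r)^{\frac1{q'}}\l(\int_Q|g(x)|^qdx\r)^{\frac1q}\\
&\le|Q|^{\frac1p-\frac1q}\cdot|Q|^{\frac1q-\frac1p}\|g\|_{\cM^p_q(\R^n)}
=\|g\|_{\cM^p_q(\R^n)}.
\end{align*}
For a general $f\in\cB^{p'}_{q'}(\R^n)$ with decomposition $f=\sum_k\lm_kb_k$, summing this estimate gives $|\int fg|\le\|g\|_{\cM^p_q(\R^n)}\sum_k|\lm_k|$; taking the infimum over admissible decompositions then produces the bound $|L_g(f)|\le\|g\|_{\cM^p_q(\R^n)}\|f\|_{\cB^{p'}_{q'}(\R^n)}$, which shows $L_g:f\mapsto\int fg\,dx$ is a bounded linear functional on $\cB^{p'}_{q'}(\R^n)$. (A small detail: interchanging sum and integral is justified because the estimate above applied to $|b_k|$ produces an absolutely convergent series.)

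\textbf{Hard direction.} Fix $L\in\cB^{p'}_{q'}(\R^n)^*$. For each cube $Q\in\cQ$, I would restrict $L$ to functions supported in $Q$: if $f\in L^{q'}(Q)$ is extended by zero and $\|f\|_{L^{q'}(Q)}\le|Q|^{\frac1p-\frac1q}$, then $f$ is itself a $(p',q')$-block, so $\|f\|_{\cB^{p'}_{q'}(\R^n)}\le1$ and $|L(f)|\le\|L\|$. Rescaling, the restriction of $L$ to $L^{q'}(Q)$ is a continuous linear functional of norm at most $\|L\|\cdot|Q|^{\frac1q-\frac1p}$. By the Riesz representation theorem for $L^{q'}(Q)$, there exists $g_Q\in L^q(Q)$ with
\[
\l(\int_Q|g_Q(x)|^q\,dx\r)^{\frac1q}\le\|L\|\cdot|Q|^{\frac1q-\frac1p}
\qquad\text{and}\qquad L(f)=\int_Q f(x)g_Q(x)\,dx
\]
for every $f\in L^{q'}(Q)$.

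\textbf{Gluing and conclusion.} The main obstacle is consistency of the local representatives. For cubes $Q_1\subset Q_2$, uniqueness in the Riesz theorem forces $g_{Q_2}\chi_{Q_1}=g_{Q_1}$ almost everywhere, because both represent the same functional on $L^{q'}(Q_1)$. Exhausting $\R^n$ by an increasing sequence of cubes then yields a single $g\in L^q_{\mathrm{loc}}(\R^n)$ whose restriction to each $Q$ coincides with $g_Q$. The local Riesz estimate upgrades to
\[
\sup_{Q\in\cQ}|Q|^{\frac1p-\frac1q}\l(\int_Q|g(x)|^q\,dx\r)^{\frac1q}\le\|L\|,
\]
so $g\in\cM^p_q(\R^n)$ with $\|g\|_{\cM^p_q(\R^n)}\le\|L\|$. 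Finally, to pass from blocks to general $f\in\cB^{p'}_{q'}(\R^n)$, I would choose a decomposition $f=\sum_k\lm_kb_k$ with $\sum_k|\lm_k|\le\|f\|_{\cB^{p'}_{q'}(\R^n)}+\eps$; the partial sums converge to $f$ in $\cB^{p'}_{q'}(\R^n)$ by tail control, hence by continuity $L(f)=\sum_k\lm_kL(b_k)=\sum_k\lm_k\int b_kg\,dx=\int fg\,dx$, the last equality again justified by the absolutely convergent bound from the easy direction. This gives the required representation.
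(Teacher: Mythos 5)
The paper does not prove Proposition \ref{prop1.1} at all; it simply cites \cite[Theorem 1]{BlRuVe} and \cite[Proposition 5]{Zo}, so there is no internal proof to compare against. Your argument is correct and self-contained: the block-by-block H\"older estimate gives the embedding $\cM^p_q(\R^n)\hookrightarrow\cB^{p'}_{q'}(\R^n)^{*}$ with $\|L_g\|\le\|g\|_{\cM^p_q(\R^n)}$, and the converse via local Riesz representation on each $L^{q'}(Q)$ (with the rescaled norm bound $\|L\|\,|Q|^{\frac1q-\frac1p}$), consistency of the local representatives, and exhaustion by cubes is exactly the standard route taken in the cited references. The small points you flag --- absolute convergence justifying the interchange of sum and integral, and norm convergence of the partial sums $\sum_{k\le N}\lm_k b_k$ to $f$ in $\cB^{p'}_{q'}(\R^n)$ so that continuity of $L$ applies --- are the right ones, and they are handled correctly; together with the two inequalities $\|g\|_{\cM^p_q(\R^n)}\le\|L\|$ and $\|L_g\|\le\|g\|_{\cM^p_q(\R^n)}$ your proof even yields the isometric identification. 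I see no gap.
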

See \cite{Ka} for more details about the predual spaces.
We refer 
\cite{AdXi1,AdXi2,GoMu} 
for recent development of the theory of the predual spaces.

In this paper 
we shall prove the following theorem 
which assert the Fatou property of block spaces. 

\begin{theorem}\label{thm1.2}
Let $1<q\le p<\8$. Suppose that 
$f$ and $f_k$, $(k=1,2,\ldots)$, 
are nonnegative, 
$\|f_k\|_{\cB^{p'}_{q'}(\R^n)}\le 1$ 
and $f_k\uparrow f$ a.e. Then 
$f\in\cB^{p'}_{q'}(\R^n)$ and 
$\|f\|_{\cB^{p'}_{q'}(\R^n)}\le 1$. 
\end{theorem}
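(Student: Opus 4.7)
The strategy is to exploit Proposition~\ref{prop1.1} in order to rewrite the block-space norm as a supremum over the unit ball of $\cM^p_q(\R^n)$, after which monotone convergence immediately handles the Fatou property.

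\textbf{Step 1.} By Proposition~\ref{prop1.1} and the Hahn--Banach theorem, each $h\in\cB^{p'}_{q'}(\R^n)$ satisfies
\[
\|h\|_{\cB^{p'}_{q'}(\R^n)}
=\sup\Bigl\{\Bigl|\int_{\R^n}h(x)g(x)\,dx\Bigr|:g\in\cM^p_q(\R^n),\;\|g\|_{\cM^p_q(\R^n)}\le 1\Bigr\}.
\]
For nonnegative $h$ one may restrict the supremum to $g\ge 0$ by replacing $g$ with $|g|$.

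\textbf{Step 2.} Fix $g\in\cM^p_q(\R^n)$ with $g\ge 0$ and $\|g\|_{\cM^p_q(\R^n)}\le 1$. Applying Step~1 to each $f_k$ yields $\int f_kg\,dx\le 1$, and the monotone convergence theorem gives
\[
\int_{\R^n}f(x)g(x)\,dx=\lim_{k\to\8}\int_{\R^n}f_k(x)g(x)\,dx\le 1.
\]
Taking the supremum over all admissible $g$ yields the duality bound
\[
\sup\Bigl\{\int_{\R^n}fg\,dx:g\ge 0,\;\|g\|_{\cM^p_q(\R^n)}\le 1\Bigr\}\le 1.
\]

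\textbf{Step 3.} It remains to upgrade this duality bound into the two assertions $f\in\cB^{p'}_{q'}(\R^n)$ and $\|f\|_{\cB^{p'}_{q'}(\R^n)}\le 1$. The plan is to first truncate: the functions $f^{(N)}:=\min(f,N)\chi_{B(0,N)}$ lie in $L^{p'}(\R^n)\subset\cB^{p'}_{q'}(\R^n)$ by \eqref{1.7}, so applying Step~1 to $f^{(N)}$ and invoking the bound from Step~2 gives $\|f^{(N)}\|_{\cB^{p'}_{q'}(\R^n)}\le 1$. For each such $N$, choose a near-optimal decomposition $f^{(N)}=\sum_j\lm_{N,j}b_{N,j}$ with $\sum_j|\lm_{N,j}|\le 1+1/N$, and organize the blocks $b_{N,j}$ according to their supporting dyadic cubes. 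Using the reflexivity of $L^{q'}(Q)$ on each dyadic cube $Q$ (valid since $1<q'<\8$) together with a diagonal extraction over the countable family of dyadic cubes, pass to weak limits and rescale to obtain $(p',q')$-blocks; a dominated-convergence-for-series argument over the cube-indexed sum verifies that these limit blocks reassemble to $f$ with total coefficient sum at most $1$.

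\textbf{Main obstacle.} The delicate point is Step~3. Hahn--Banach alone produces only an isometric embedding $\cB^{p'}_{q'}(\R^n)\hookrightarrow(\cM^p_q(\R^n))^{*}$, and showing that the Step~2 estimate actually forces membership in $\cB^{p'}_{q'}(\R^n)$ itself, rather than merely in the bidual $(\cB^{p'}_{q'}(\R^n))^{**}$, requires exploiting the explicit atomic structure. Converting the abstract duality bound back into a concrete block decomposition of $f$, and in particular controlling the contributions of small cubes in the infinite sum of weak limits, is where the bulk of the technical work lies.
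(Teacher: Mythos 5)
Your overall strategy coincides with the paper's: Steps 1--2 reproduce the final portion of the paper's proof (the norm bound via Proposition \ref{prop1.1} and monotone convergence), and the plan sketched in Step 3 --- organize near-optimal block decompositions by dyadic cubes, extract weak limits in $L^{q'}$ by a diagonal argument, and reassemble --- is exactly the paper's Lemma \ref{lem2.2} followed by its weak$*$-compactness argument. (Your preliminary reduction to the truncations $f^{(N)}=\min(f,N)\chi_{B(0,N)}$ is harmless but buys nothing: the $f^{(N)}$ are again a nonnegative increasing sequence with block norm at most $1$ converging to $f$, so you have reduced the theorem to itself.) The problem is that the one genuinely hard step is left as an assertion. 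You write that ``a dominated-convergence-for-series argument over the cube-indexed sum verifies that these limit blocks reassemble to $f$,'' and then in your ``Main obstacle'' paragraph you correctly identify this reassembly as where the bulk of the work lies --- but you never supply it. This is a genuine gap, not a routine verification.

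Concretely, after passing to weak limits one has a candidate $f_0=\sum_{Q\in\cD}\lm(Q)b(Q)\in\cB^{p'}_{q'}(\R^n)$, and weak$*$ convergence gives $\int_{Q_0}\lm_{k_j}(Q)b_{k_j}(Q)\,dx\to\int_{Q_0}\lm(Q)b(Q)\,dx$ only for each \emph{fixed} $Q$. To conclude $\int_{Q_0}f_{k_j}\to\int_{Q_0}f_0$ one must interchange this limit with the sum over the doubly infinite family $\cD$, which requires tail bounds uniform in $j$. The paper obtains these by splitting $\cD$ into small, intermediate, and large cubes: for small cubes it uses $\int_{3Q}|b(Q)|\,dx\le|3Q|^{1/p}$ together with summability of the coefficients, and for large cubes it uses H\"older's inequality to get $\int_{Q_0}|b(Q)|\,dx\le|Q_0|^{1/q}|3Q|^{1/p-1/q}$, which decays only because $1/p-1/q<0$ strictly. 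Your sketch names neither dominating bound, and in particular never isolates the role of the strict inequality $q<p$ (when $p=q$ the large-cube bound does not decay, and indeed the theorem is then trivial by Fatou's lemma rather than by this argument). You also omit the final identification $f=f_0$, which the paper gets from $f_k\uparrow f$, the convergence of $\int_{Q_0}f_{k_j}$, and the Lebesgue differentiation theorem. Without these pieces the argument does not close; supplying them would essentially reproduce Section \ref{sec2} of the paper.
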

This quite simple fact can not be found in any literature as far as we know. 
Since the case when $p=q$ is clear from the Fatou lemma,
Theorem \ref{thm1.2} is significant only when $q<p$.
Seemingly, it is clear that we have $f \in \cB^{p'}_{q'}(\R^n)$.
However, it is difficult to find an expression of $f$.

The letter $C$ will be used for constants
that may change from one occurrence to another.
Constants with subscripts, such as $C_1$, $C_2$, do not change
in different occurrences.

\section{Proof of Theorem \ref{thm1.2}}\label{sec2}
In what follows 
we shall prove Theorem \ref{thm1.2}. 
We need the following lemmas. 

\begin{lemma}\label{lem2.1}
Let $1<q\le p<\8$. Then, 
a function $f$ belongs to $\cB^{p'}_{q'}(\R^n)$ 
if and only if 
there exists $g\in\cB^{p'}_{q'}(\R^n)$ 
such that 
$|f(x)|\le g(x)$, (a.e. $x\in\R^n$).
\end{lemma}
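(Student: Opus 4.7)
The ``only if'' direction is immediate. Given a block decomposition $f=\sum_k\lambda_kb_k$ with $\sum_k|\lambda_k|<\infty$, the function $g:=\sum_k|\lambda_k|\,|b_k|$ majorises $|f|$ pointwise, and because the defining inequality \eqref{1.6} is insensitive to moduli, each $|b_k|$ remains a $(p',q')$-block; the displayed formula is therefore a valid block decomposition of $g$, so $g\in\cB^{p'}_{q'}(\R^n)$.

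The ``if'' direction is the nontrivial content. The plan is to turn a block decomposition of the majorant $g$ into one of $f$ by pointwise division. Given $\eps>0$ and a near-optimal decomposition of $g$, take absolute values as in the previous paragraph to write (after possibly replacing $g$ by its majorant, which also dominates $|f|$) $g=\sum_k\mu_k\beta_k$ with $\mu_k\ge0$, $\beta_k\ge0$, and $\sum_k\mu_k<\|g\|_{\cB^{p'}_{q'}(\R^n)}+\eps$. Define the measurable function $\phi:=f/g$ on $\{g>0\}$ and $\phi:=0$ elsewhere: the hypothesis $|f|\le g$ yields $|\phi|\le1$, and the same hypothesis forces $f=0$ a.e.\ on $\{g=0\}$, so $f=\phi g$ pointwise almost everywhere. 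Multiplying the decomposition of $g$ termwise by $\phi$ produces
\[
f=\sum_k\mu_k(\phi\beta_k),
\]
in which each summand $\phi\beta_k$ is supported in the same cube $Q_k$ as $\beta_k$ and satisfies $\|\phi\beta_k\|_{L^{q'}(Q_k)}\le\|\beta_k\|_{L^{q'}(Q_k)}\le|Q_k|^{1/p-1/q}$, so is again a $(p',q')$-block. Hence $\|f\|_{\cB^{p'}_{q'}(\R^n)}\le\sum_k\mu_k$, and letting $\eps\to0$ yields $\|f\|_{\cB^{p'}_{q'}(\R^n)}\le\|g\|_{\cB^{p'}_{q'}(\R^n)}$.

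The only delicate point is justifying the termwise multiplication, i.e.\ that the series $\sum_k\mu_k(\phi\beta_k)$ genuinely represents $f$ in the convergence sense demanded by the block-space definition. Since the partial sums $\sum_{k=1}^N\mu_k\beta_k$ increase monotonically to $g\in L^{q'}_{\mathrm{loc}}(\R^n)$ and $|\phi|\le1$, the corresponding partial sums for $f$ are dominated in absolute value by $g$ and converge pointwise to $\phi g=f$; dominated convergence then transfers this to convergence in $L^{q'}_{\mathrm{loc}}(\R^n)$, which is the natural convergence underlying the definition.
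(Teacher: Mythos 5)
Your proof is correct and follows essentially the same route as the paper: take absolute values of a block decomposition for the ``only if'' direction, and for the ``if'' direction multiply a block decomposition of the majorant $g$ termwise by $f/g$ (set to $0$ where $g=0$), noting that a block multiplied by a function bounded by $1$ in modulus remains a block. Your additional care in passing to a nonnegative decomposition and justifying the convergence of the termwise-multiplied series is a welcome refinement of a point the paper leaves implicit, but it does not change the argument.
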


\begin{proof}
Suppose that $f\in\cB^{p'}_{q'}(\R^n)$. 
Then there exist a sequence $\{\lm_k\}_{k=1}^{\8}\in l^1$ 
and a $(p',q')$-block $b_k$ such that 
$f=\sum_k\lm_kb_k$. 
Letting $g=\sum_k|\lm_k||b_k|$, 
we have 
$g\in\cB^{p'}_{q'}(\R^n)$ and 
$|f|\le g$. 
Conversely, suppose that 
there exists $g\in\cB^{p'}_{q'}(\R^n)$ 
that satisfies $|f(x)|\le g(x)$. 
Decompose $g$ as 
$g=\sum_k\lm'_kb'_k$ 
where $\{\lm'_k\}_{k=1}^{\8}\in l^1$ and 
$b'_k$ is a $(p',q')$-block. 
Then we see that 
$$
\chi_{\{y:\,g(y)\neq 0\}}(x)
=
\sum_k\lm'_k\frac1{g(x)}b'_k(x)
$$
and, hence, 
$$
f(x)
=
\sum_k\lm'_k\frac{f(x)}{g(x)}b'_k(x).
$$
Since $|f(x)|/g(x) \le 1$, 
the function $(f(x)/g(x))b'_k(x)$ becomes 
a $(p',q')$-block. 
This proves the lemma.
\end{proof}

We denote by $\cD$ the family of all dyadic cubes of the form 
$Q=2^{-k}(i+[0,1)^n)$, 
$k\in\Z,\,i\in\Z^n$. 

\begin{lemma}\label{lem2.2}
Let $1<q\le p<\8$ and 
$f\in\cB^{p'}_{q'}(\R^n)$ with 
$\|f\|_{\cB^{p'}_{q'}(\R^n)}\le 1$. 
Then $f$ can be decomposed as 
$$
f=\sum_{Q\in\cD}\lm(Q)b(Q),
$$
where $\lm(Q)$ is a positive number with 
$$
\sum_{Q\in\cD}\lm(Q)\le 2\cdot3^n
$$
and $b(Q)$ is a $(p',q')$-block with 
$\supp b(Q)\subset 3Q$. 
\end{lemma}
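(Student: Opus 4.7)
The plan is to take a near-optimal decomposition of $f$ into general blocks, attach to each one a dyadic cube whose triple just barely covers the block's support, rescale to enforce the block condition on the triple, and finally group blocks by attached dyadic cube. First, I would use the definition of the norm as an infimum: for any $\eps>0$ write $f=\sum_k \lm_k b_k$ with each $b_k$ a $(p',q')$-block supported on some cube $R_k$, $\lm_k\ge 0$ (absorb any signs into the $b_k$), and $\sum_k \lm_k\le 1+\eps$; take $\eps=1$ so $\sum_k \lm_k\le 2$.

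The crux is the geometric choice of dyadic cube. For each $R_k$ with side length in $[2^{-j},2^{-j+1})$, let $Q_k\in\cD$ be the dyadic cube of side $2^{-j}$ containing the center of $R_k$; a one-line sup-norm estimate gives $R_k\subset 3Q_k$, while $|Q_k|\le|R_k|$ yields $|3Q_k|\le 3^n|R_k|$. Setting $\mu_k:=(|3Q_k|/|R_k|)^{1/q-1/p}$ and $\tilde b_k:=\mu_k^{-1}b_k$, the block condition on $b_k$ translates directly into $\|\tilde b_k\|_{L^{q'}(3Q_k)}\le|3Q_k|^{1/p-1/q}$, so $\tilde b_k$ is a $(p',q')$-block supported in $3Q_k$. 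Critically, since $q>1$ forces $0\le 1/q-1/p<1$ while $|3Q_k|/|R_k|\le 3^n$, one obtains the uniform bound $\mu_k\le 3^n$.

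To finish I would reindex. For each $Q\in\cD$ with $I_Q:=\{k:Q_k=Q\}$ nonempty, set
$$
\lm(Q):=\sum_{k\in I_Q}\lm_k\mu_k, \qquad b(Q):=\frac{1}{\lm(Q)}\sum_{k\in I_Q}\lm_k\mu_k\,\tilde b_k.
$$
Being a convex combination of $(p',q')$-blocks supported in $3Q$, $b(Q)$ is itself such a block by the triangle inequality in $L^{q'}(3Q)$. Then $f=\sum_{Q\in\cD}\lm(Q)b(Q)$, and $\sum_Q\lm(Q)=\sum_k\lm_k\mu_k\le 3^n\sum_k\lm_k\le 2\cdot 3^n$.

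The main obstacle is spotting the correct dyadic cube. The familiar choice — picking $Q_k$ one scale larger so that $|Q_k|\ge|R_k|$ and $3Q_k$ comfortably contains $R_k$ — is standard but inflates $|3Q_k|/|R_k|$ up to $6^n$ and yields the weaker constant $2\cdot 6^n$. Taking $Q_k$ one dyadic scale smaller (with $|Q_k|\le|R_k|$, still $R_k\subset 3Q_k$) is what produces the factor $3^n$ in the stated bound.
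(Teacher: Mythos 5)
Your proof is correct and follows essentially the same route as the paper: start from a decomposition with $\sum_k\lm_k\le 2$, assign each block's supporting cube $R_k$ to a dyadic cube $Q$ with $R_k\subset 3Q$ and $|Q|\le|R_k|$, pay a factor at most $3^n$ in the coefficients to turn each block into a $(p',q')$-block on $3Q$, and regroup by $Q$. The only difference is cosmetic — you make the geometric choice of $Q_k$ explicit and rescale block by block via $\mu_k$, whereas the paper simply asserts the partition of the index set and inserts a uniform factor $3^n$ into $\lm(Q)$.
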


\begin{proof}
First, decompose $f$ as 
$$
f=\sum_{k\in K}\lm_kb_k
$$
where $K\subset\N$ is an index set, 
$\sum_{k\in K}|\lm_k|\le 2$ and 
$b_k$ is a $(p',q')$-block. 
We divide $K$ into the disjoint sets 
$K(Q)\subset\N$, $Q\in\cD$, 
as 
$$
K=\bigcup_{Q\in\cD}K(Q)
$$
and $K(Q)$ fulfills 
$$
\supp b_k\subset 3Q
\text{ and }
|\supp b_k|\ge|Q|
\text{ when }
k\in K(Q).
$$
We now rewrite $f$ as 
\begin{align*}
f
&=
\sum_{k\in K}\lm_kb_k
=
\sum_{Q\in\cD}
\left(\sum_{k\in K(Q)}\lm_kb_k\right)
\\ &=
\sum_{Q\in\cD}
\l\{3^n\sum_{k\in K(Q)}|\lm_k|\r\}
\cdot
\l\{
\l(3^n\sum_{k\in K(Q)}|\lm_k|\r)^{-1}
\sum_{k\in K(Q)}\lm_kb_k
\r\}
=:
\sum_{Q\in\cD}\lm(Q)b(Q).
\end{align*}
It follows that 
$$
\sum_{Q\in\cD}\lm(Q)
=
3^n
\sum_{Q\in\cD}
\left(\sum_{k\in K(Q)}|\lm_k|\right)
=
3^n\sum_{k\in K}|\lm_k|
\le 2\cdot3^n
$$
and that
\begin{align*}
\lefteqn{
\l(3^n\sum_{k\in K(Q)}|\lm_k|\r)^{-1}
\l\|\sum_{k\in K(Q)}\lm_kb_k\r\|_{L^{q'}(\R^n)}
}\\ &\le
\l(3^n\sum_{k\in K(Q)}|\lm_k|\r)^{-1}
\sum_{k\in K(Q)}
|\lm_k|
\|b_k\|_{L^{q'}(\R^n)}
\\ &\le
\l(3^n\sum_{k\in K(Q)}|\lm_k|\r)^{-1}
|Q|^{\frac1p-\frac1q}
\sum_{k\in K(Q)}|\lm_k|
\le
|3Q|^{\frac1p-\frac1q},
\end{align*}
which implies that 
$b(Q)$ is a $(p',q')$-block with 
$\supp b(Q)\subset 3Q$. 
These complete the proof.
\end{proof}

\begin{proof}[Proof of Theorem \ref{thm1.2}]
We may assume that $1<q<p<\8$
as we remarked just below the statement.
By Lemma \ref{lem2.2} 
$f_k$ can be decomposed as 
$$
f_k
=
\sum_{Q\in\cD}\lm_k(Q)b_k(Q),
$$
where $\lm_k(Q)$ is a positive number with 
\begin{equation}\label{2.1}
\sum_{Q\in\cD}\lm_k(Q)\le 2\cdot3^n
\end{equation}
and $b_k(Q)$ is a $(p',q')$-block with 
$\supp b_k(Q)\subset 3Q$ and 
\begin{equation}\label{2.2}
\|b_k(Q)\|_{L^{q'}(\R^n)}
\le
|3Q|^{\frac1p-\frac1q}.
\end{equation}
Noticing \eqref{2.1}, \eqref{2.2} and 
the weak$*$-compactness of the Lebesgue space 
$L^{q'}(3Q)$, 
we now apply a diagonalization argument and, hence, 
we can select an infinite subsequence 
$\{f_{k_j}\}_{j=1}^{\8} \subset\{f_k\}_{k=1}^{\8}$ 
that satisfies the following: 
\begin{align}
&\nonumber
f_{k_j}
=
\sum_{Q\in\cD}\lm_{k_j}(Q)b_{k_j}(Q),
\\ &\label{2.3}
\lim_{j\to\8}\lm_{k_j}(Q)=\lm(Q),
\\ &\label{2.4}
\lim_{j\to\8}b_{k_j}(Q)=b(Q)
\text{ in the weak$*$-topology of }
L^{q'}(3Q),
\end{align}
where $b(Q)$ is a $(p',q')$-block with 
$\supp b(Q)\subset 3Q$. 
We set 
$$
f_0:=\sum_{Q\in\cD}\lm(Q)b(Q).
$$
Then, by the Fatou theorem and \eqref{2.1}, 
\begin{equation}\label{2.5}
\sum_{Q\in\cD}\lm(Q)
\le \liminf_{j\to\8}
\sum_{Q\in\cD}\lm_{k_j}(Q)
\le 2\cdot3^n,
\end{equation}
which implies 
$f_0\in\cB^{p'}_{q'}(\R^n)$.

We shall verify that 
\begin{equation}\label{2.6}
\lim_{j\to\8}
\int_{Q_0}f_{k_j}(x)\,dx
=
\int_{Q_0}f_0(x)\,dx
\end{equation}
for all $Q_0\in\cD$. 
Once \eqref{2.6} is established, 
we will see that $f=f_0$ and 
$f\in\cB^{p'}_{q'}(\R^n)$ 
by virtue of the Lebesgue differentiation theorem
because at least we know that 
$f_0$ locally in $L^{q'}(\R^n)$.

Let $\eps>0$ be given. 
We set 
$$
\l\{\begin{array}{l}
\cD_1(Q_0)
:=
\{Q\in\cD:\,
Q\cap Q_0\ne\0,\,
|3Q|\le c_1\},
\\
\cD_2(Q_0)
:=
\{Q\in\cD:\,
Q\cap Q_0\ne\0,\,
|3Q|\in(c_1,c_2)\},
\\
\cD_3(Q_0)
:=
\{Q\in\cD:\,
Q\cap Q_0\ne\0,\,
|3Q|\ge c_2\},
\end{array}\r.
$$
where we have defined, 
keeping in mind that $1/p-1/q<0$, 
$$
\l\{\begin{array}{l}
c_1^{\frac1p}
=
\dfrac{\eps}{12\cdot3^n},
\\
c_2^{\frac1p-\frac1q}
=
\dfrac{\eps}{12\cdot3^n|Q_0|^{1/q}},
\end{array}\r.
$$
It follows that 
\begin{align*}
\lefteqn{
\sum_{Q\in\cD_1(Q_0)}
\int_{Q_0}
\l|
\lm_{k_j}(Q)b_{k_j}(Q)(x)
-
\lm(Q)b(Q)(x)
\r|\,dx
}\\ &\le
\sum_{Q\in\cD_1(Q_0)}
(\lm_{k_j}(Q)+\lm(Q))|3Q|^{\frac1p}
\le
4\cdot3^nc_1^{\frac1p}
=\frac{\eps}3,
\end{align*}
where we have used 
\eqref{2.1}, \eqref{2.5} and 
$$
\int_{3Q}|b_{k_j}(Q)(x)|\,dx,\,
\int_{3Q}|b(Q)(x)|\,dx
\le
|3Q|^{\frac1p}
$$
(see \eqref{1.6}). 
It follows from H\"{o}lder's inequality that 
\begin{align*}
\lefteqn{
\sum_{Q\in\cD_3(Q_0)}
\int_{Q_0}
\l|
\lm_{k_j}(Q)b_{k_j}(Q)(x)
-
\lm(Q)b(Q)(x)
\r|\,dx
}\\ &\le
|Q_0|^{\frac1q}
\sum_{Q\in\cD_3(Q_0)}
(\lm_{k_j}(Q)+\lm(Q))
|3Q|^{\frac1p-\frac1q}
\le
4\cdot3^n
|Q_0|^{\frac1q}
c_2^{\frac1p-\frac1q}
=\frac{\eps}3,
\end{align*}
where we have used the fact that $1/p-1/q<0$ and 
$$
\|b_{k_j}(Q)\|_{L^{q'}(3Q)},\,
\|b(Q)\|_{L^{q'}(3Q)}
\le
|3Q|^{\frac1p-\frac1q}.
$$
Finally, 
\begin{align*}
\lefteqn{
\sum_{Q\in\cD_2(Q_0)}
\l|
\int_{Q_0}\lm_{k_j}(Q)b_{k_j}(Q)(x)\,dx
-
\int_{Q_0}\lm(Q)b(Q)(x)\,dx
\r|
}\\ &\le
\sum_{Q\in\cD_2(Q_0)}
\l|
\int_{Q_0}\lm_{k_j}(Q)b_{k_j}(Q)(x)\,dx
-
\int_{Q_0}\lm(Q)b_{k_j}(Q)(x)\,dx
\r|
\\ &\quad +
\sum_{Q\in\cD_2(Q_0)}
\l|
\int_{Q_0}\lm(Q)b_{k_j}(Q)(x)\,dx
-
\int_{Q_0}\lm(Q)b(Q)(x)\,dx
\r|.
\end{align*}
{}From \eqref{2.3} and 
the fact that $\cD_2(Q_0)$ contains the only finite number of dyadic cubes, 
\begin{eqnarray*}
&&\sum_{Q\in\cD_2(Q_0)}
\l|
\int_{Q_0}\lm_{k_j}(Q)b_{k_j}(Q)(x)\,dx
-
\int_{Q_0}\lm(Q)b_{k_j}(Q)(x)\,dx
\r|\\ 
&&\le
c_2^{\frac1p}
\sum_{Q\in\cD_2(Q_0)}
|\lm_{k_j}(Q)-\lm(Q)|
\le\frac{\eps}6
\end{eqnarray*}
for large $j$.
{}From \eqref{2.4},
\begin{align*}
\lefteqn{
\sum_{Q\in\cD_2(Q_0)}
\l|
\int_{Q_0}\lm(Q)b_{k_j}(Q)(x)\,dx
-
\int_{Q_0}\lm(Q)b(Q)(x)\,dx
\r|
}\\ &\le
2\cdot3^n
\sup_{Q\in\cD_2(Q_0)}
\l|
\int_{Q_0}b_{k_j}(Q)(x)\,dx
-
\int_{Q_0}b(Q)(x)\,dx
\r|
\le\frac{\eps}6
\end{align*}
for large $j$.
These prove \eqref{2.6}.

Since $f_k\uparrow f$ a.e., 
we must have by \eqref{2.6} 
$$
\int_{Q_0}f(x)\,dx
=
\int_{Q_0}f_0(x)\,dx
$$
for all $Q_0\in\cD$. 
This yields $f=f_0$ a.e., 
by the Lebesgue differential theorem, and, hence, 
$f\in\cB^{p'}_{q'}(\R^n)$.
Since we have verified 
$f\in\cB^{p'}_{q'}(\R^n)$, 
it follows from Proposition \ref{prop1.1} that
$$
\|f\|_{\cB^{p'}_{q'}(\R^n)}
=
\sup\l\{
\l|\int_{\R^n}f_k(x)g(x)\,dx\r|:\,
k=1,2,\ldots,\,
\|g\|_{\cM^p_q(\R^n)}\le 1
\r\}
\le 1.
$$
This completes the proof of the theorem.
\end{proof}

\section{Banach function spaces}\label{sec3}
To state our next result, 
we need terminology from the theory of the Banach function spaces 
introduced in the book \cite{BeSh}. 
We place ourselves in the setting of a $\sg$-finite measure space $(R,\mu)$. 
Let $\M^{+}$ be the cone of all $\mu$-measurable functions on $R$ 
assuming their values lie in $[0,\8]$. 

\begin{definition}\label{def3.1}
A mapping $\rho:\M^{+}\to[0,\8]$ 
is called a \lq\lq Banach function norm" 
if, 
for all $f,g,f_n,(n=1,2,3,\ldots),$ 
in $\M^{+}$, 
for all constants $a\ge 0$ and 
for all $\mu$-measurable subsets $E$ of $R$, 
the following properties hold:

\begin{enumerate}
\item[(P1)]
$\rho(f)=0\,\Leftrightarrow\,f=0$ 
$\mu$-a.e.; 
$\rho(af)=a\rho(f)$;
$\rho(f+g)\le\rho(f)+\rho(g)$;

\item[(P2)]
$0\le g\le f\text{ $\mu$-a.e. }
\,\Rightarrow\,
\rho(g)\le\rho(f)$;

\item[(P3)]
$0\le f_n\uparrow f\text{ $\mu$-a.e. }
\,\Rightarrow\,
\rho(f_n)\uparrow\rho(f)$;

\item[(P4)]
$\mu(E)<\8
\,\Rightarrow\,
\rho(\chi_{E})<\8$;

\item[(P5)]
$\mu(E)<\8
\,\Rightarrow\,
\int_{E}fd\mu\le C_{E}\rho(f)$
for some constant $C_{E}$, 
$0<C_{E}<\8$, 
depending on $E$ and $\rho$ but independent of $f$.
\end{enumerate}
\end{definition}
We say that 
a weaker version of the Fatou property (property P3) 
is \lq\lq the Fatou property".

Let $\M$ denote the collection of all extended scalar-valued 
(real or complex) $\mu$-measurable functions on $R$. 
As usual, 
any two functions coinciding $\mu$-a.e. shall be identified. 

\begin{definition}\label{def3.2}
Let $\rho$ be a function norm. 
The collection $X=X(\rho)$ 
of all functions $f$ in $\M$ 
for which $\rho(|f|)<\8$ is called 
a \lq\lq Banach function space". 
For each $f\in X$, define 
$$
\|f\|_{X}:=\rho(|f|). 
$$
\end{definition}

Let $1<q<p<\8$. 
The Morrey space $\cM^p_q(\R^n)$ and 
the block space $\cB^{p'}_{q'}(\R^n)$ 
are not Banach function spaces, 
since the norm 
$\|\cdot\|_{\cM^p_q(\R^n)}$
fails property P5 and the norm 
$\|\cdot\|_{\cB^{p'}_{q'}(\R^n)}$
fails property P4 (cf. \eqref{1.5}).

\begin{example}\label{exm3.3}
Here, 
we exhibit an example showning that 
$\|\cdot\|_{\cM^p_q(\R^n)}$
fails property P5 
when $1<q<p<\8$.
For simplicity, 
we let $n=1$ and $1<q<2=p$; 
other cases are dealt analogously.
Let us consider the sequence
$$
(a_1,a_2,\ldots)
=
\l(1,\frac14,\frac14,\frac1{16},\frac{1}{16},\frac{1}{16},\frac{1}{16},\ldots\r),
$$
that is, 
$a_i$ is a decreasing sequence and 
$4^{-l}$ appears $2^l$ times for 
$l=0,2,\ldots$.

Let $\al(p,q)\gg 1$. We define 
$$
E
:=
\bigcup_{j=1}^{\8}
(\al(p,q)^j,\al(p,q)^j+a_j).
$$
Then $|E|=2$. Define 
$$
f(t)
:=
\sum_{j=1}^{\8}
4^{j/p}\chi_{(\al(p,q)^j,\al(p,q)^j+a_j)}(t),
\quad
(t\in\R).
$$
Then $f$ belongs to $\cM^p_q(\R)$. 
Meanwhile, 
$$
\int_{E}f(t)\,dt
=
\sum_{j=1}^{\8}
4^{j/p-j}\cdot2^j=\8.
$$
\end{example}

\begin{definition}\label{def3.4}
If $\rho$ is a function norm, 
its \lq\lq associate norm" $\rho'$ is defined on $\M^{+}$ by 
\begin{equation}\label{3.1}
\rho'(g)
:=
\sup\l\{
\int_{R}fg\,d\mu:\,
f\in\M^{+},\rho(f)\le 1\r\},
\quad
(g\in\M^{+}).
\end{equation}
\end{definition}

We have the following property:

\begin{theorem}[{\rm\cite[Theorem 2.2]{BeSh}}]\label{thm3.5}
Let $\rho$ be a function norm. Then 
the associate norm $\rho'$ is itself a function norm.
\end{theorem}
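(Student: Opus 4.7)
The plan is to verify the five properties (P1)--(P5) for $\rho'$ directly from its definition as a supremum in \eqref{3.1}, leveraging the corresponding properties of $\rho$ together with the $\sg$-finiteness of $\mu$.

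The algebraic parts of (P1), as well as (P2) and the two ``finite-measure'' properties (P4) and (P5), are essentially bookkeeping. Positive homogeneity $\rho'(ag)=a\rho'(g)$ and subadditivity $\rho'(g_1+g_2)\le\rho'(g_1)+\rho'(g_2)$ are immediate from pulling a constant out of the integral and from the triangle inequality for the supremum. Monotonicity (P2) is clear, since $0\le g_1\le g_2$ forces $\int fg_1\,d\mu\le\int fg_2\,d\mu$ for every $f\in\M^{+}$. For (P4), if $\mu(E)<\8$ then $\rho'(\chi_E)=\sup\{\int_E f\,d\mu:\rho(f)\le 1\}\le C_E$ by (P5) for $\rho$. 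For (P5), I would take $E$ with $0<\mu(E)<\8$ (the null case being trivial); by (P4) for $\rho$ the quantity $\rho(\chi_E)$ is finite, and by (P1) for $\rho$ it is strictly positive, so inserting the test function $f=\chi_E/\rho(\chi_E)$ into \eqref{3.1} yields
$$
\int_E g\,d\mu\le\rho(\chi_E)\,\rho'(g),
$$
which is (P5) for $\rho'$ with constant $C_E=\rho(\chi_E)$.

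The only substantive steps are (P3) and the nondegeneracy half of (P1). For (P3), fix $0\le g_n\uparrow g$ $\mu$-a.e. Monotonicity (just proved) gives $\sup_n\rho'(g_n)\le\rho'(g)$. For the reverse inequality, for any fixed $f\in\M^{+}$ with $\rho(f)\le 1$, the monotone convergence theorem yields $\int fg_n\,d\mu\uparrow\int fg\,d\mu$, whence $\int fg\,d\mu\le\sup_n\rho'(g_n)$; taking the supremum over such $f$ gives $\rho'(g)\le\sup_n\rho'(g_n)$. For the nondegeneracy in (P1), suppose $\rho'(g)=0$ with $g\in\M^{+}$; by $\sg$-finiteness, write $R=\bigcup_k R_k$ with $\mu(R_k)<\8$, and apply the just-established (P5) for $\rho'$ to each $R_k$ to conclude $\int_{R_k}g\,d\mu\le C_{R_k}\rho'(g)=0$. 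Hence $g=0$ $\mu$-a.e.\ on each $R_k$, and therefore $\mu$-a.e.\ on $R$; the converse is trivial.

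There is no genuine obstacle, and the only real care concerns the order of the verifications: (P5) for $\rho'$ invokes (P4) for $\rho$, (P4) for $\rho'$ invokes (P5) for $\rho$, and the nondegeneracy in (P1) for $\rho'$ then feeds off (P5) for $\rho'$ together with the $\sg$-finiteness of $\mu$. The interchange-of-suprema step in (P3) is the only place where one must appeal to a convergence theorem, but the monotone convergence theorem handles it cleanly.
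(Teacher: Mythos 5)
The paper offers no proof of this statement --- it is quoted directly from \cite[Theorem 2.2]{BeSh} --- so there is nothing internal to compare against. Your verification is correct and is essentially the standard argument from that reference: the two points needing care, namely the order of the verifications (deduce (P4) and (P5) for $\rho'$ from (P5) and (P4) for $\rho$ respectively, then combine (P5) for $\rho'$ with the $\sigma$-finiteness of $\mu$ to get the nondegeneracy half of (P1)) and the use of the monotone convergence theorem to pass the supremum through in (P3), are both handled properly.
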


\begin{definition}\label{def3.6}
Let $\rho$ be a function norm and 
let $X=X(\rho)$ be the Banach function space determined by $\rho$ 
as in Definition \ref{def3.2}. 
Let $\rho'$ be the associate norm of $\rho$. 
The Banach function space $X(\rho')$ determined by $\rho'$ 
is called the \lq\lq associate space" 
of $X$ and is denoted by $X'$.
\end{definition}

\begin{theorem}[{\rm\cite[Theorem 2.7]{BeSh}}]\label{thm3.7}
Every Banach function space $X$ 
coincides with its second associate space $X''$. 
In other words, 
a function $f$ belongs to $X$ 
if and only if 
it belongs to $X''$, 
and in that case
$$
\|f\|_{X}=\|f\|_{X''},
\quad
(f\in X=X'').
$$
\end{theorem}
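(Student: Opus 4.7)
The plan is to establish the equivalent statement $\rho = \rho''$ on $\M^{+}$. The inequality $\rho''(f) \le \rho(f)$ is a direct consequence of the H\"{o}lder-type inequality
$$
\int_R fg\,d\mu \le \rho(f)\,\rho'(g),
\qquad (f,g \in \M^{+}),
$$
which is itself a restatement of the definition \eqref{3.1}: scaling $f$ so that $\rho(f) \le 1$ exhibits $\int_R fg\,d\mu$ as one value entering the supremum defining $\rho'(g)$. Applying this inequality with $g$ ranging over the unit ball of $\rho'$ and taking the supremum gives $\rho''(f) \le \rho(f)$.

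For the reverse inequality $\rho(f) \le \rho''(f)$, my first move is to reduce to the case where $f$ is bounded and supported on a set of finite $\mu$-measure. By $\sigma$-finiteness and P4, choose $E_n \uparrow R$ with $\mu(E_n) < \infty$, and set $f_n := \min(f,n)\,\chi_{E_n}$, so that $0 \le f_n \uparrow f$ $\mu$-a.e. Applying Theorem \ref{thm3.5} twice shows that $\rho''$ is itself a function norm; hence both $\rho$ and $\rho''$ satisfy P3, yielding $\rho(f_n) \uparrow \rho(f)$ and $\rho''(f_n) \uparrow \rho''(f)$. It therefore suffices to prove the inequality when $f$ is bounded with support in some finite-measure set $E$.

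In that setting the argument is a Hahn--Banach separation. Properties P4 and P5 provide continuous embeddings $L^\infty(E) \hookrightarrow X \hookrightarrow L^1(E)$. Given any $\lambda < \rho(f)$, the convex set
$$
B_\lambda := \{h \in L^1(E) : h \ge 0,\ \rho(h) \le \lambda\}
$$
does not contain $f$, and one first verifies that $B_\lambda$ is $L^1(E)$-closed (by extracting an a.e.-convergent subsequence from any $L^1$-convergent sequence in $B_\lambda$ and combining Fatou's lemma with P3). The Hahn--Banach theorem then produces a continuous linear functional on $L^1(E)$ separating $f$ from $B_\lambda$, represented by some $g \in L^\infty(E)$. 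Testing the separation inequality against elements of the unit ball of $X$ shows that $|g|$ may be taken with $\rho'(|g|) \le 1$, while $\int_E f|g|\,d\mu > \lambda$. Hence $\rho''(f) > \lambda$; letting $\lambda \uparrow \rho(f)$ completes the bounded, finite-support case, and the case $\rho(f) = +\infty$ is treated analogously by producing $g_n$ with $\rho'(g_n) \le 1$ and $\int_R f g_n\,d\mu \to \infty$.

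The principal obstacle is the Hahn--Banach step, which is delicate on two counts. First, verifying that $B_\lambda$ is genuinely closed in $L^1(E)$ requires some care because P3 speaks only about monotone sequences; one must pass to a.e.-convergent subsequences of general $L^1$-convergent sequences and patch with Fatou's lemma. Second, one must arrange the separating functional so that it corresponds to a \emph{nonnegative} $g$ with $\rho'(g) \le 1$, which is done by replacing $g$ with $|g|$ and re-testing against the positive cone of the unit ball. Once these technicalities are settled, the remaining estimates are routine monotone-convergence bookkeeping driven by P3.
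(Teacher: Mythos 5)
The paper offers no proof of this statement---it is quoted verbatim from \cite[Theorem 2.7]{BeSh} (the Lorentz--Luxemburg theorem)---so the only comparison available is with that reference, and your sketch reproduces its argument faithfully: H\"{o}lder's inequality for $\rho''\le\rho$, reduction to bounded functions supported on sets of finite measure via P3 and Theorem \ref{thm3.5}, and a Hahn--Banach separation in $L^1(E)$ whose closedness step correctly combines a.e.\ subsequences, Fatou's lemma and P3. The one point needing care is the final normalization: separating $f$ only from the positive cone $B_\lambda$ controls $\int_E hg\,d\mu$ for $h\ge0$, which bounds $\rho'(g^{+})$ (test against $h\chi_{\{g>0\}}$, admissible by P2) but not directly $\rho'(|g|)$; replacing $|g|$ by $g^{+}$, or separating from the balanced set $\{h:\rho(|h|)\le\lambda\}$ as in \cite{BeSh}, closes this gap and the rest of the argument is sound.
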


\section{Application of Theorem \ref{thm1.2}}\label{sec4}
In what follows 
we shall apply Theorem \ref{thm1.2} and 
characterize the predual of block spaces. 

\begin{theorem}\label{thm4.1}
Let $1<q\le p<\8$. Then 
the associate space $\cM^p_q(\R^n)'$
coincides with 
the block space $\cB^{p'}_{q'}(\R^n)$.
\end{theorem}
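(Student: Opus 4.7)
The plan is to establish both containments $\cB^{p'}_{q'}(\R^n)\subseteq\cM^p_q(\R^n)'$ and $\cM^p_q(\R^n)'\subseteq\cB^{p'}_{q'}(\R^n)$ with coincidence of norms.

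For the easy direction I would take a single $(p',q')$-block $b$ supported on a cube $Q$ and combine H\"{o}lder's inequality with the trivial Morrey estimate
\[
\|f\|_{L^q(Q)}=|Q|^{1/q}\l(\frac{1}{|Q|}\int_Q|f(x)|^q\,dx\r)^{1/q}\le|Q|^{1/q-1/p}\|f\|_{\cM^p_q(\R^n)}
\]
and the block bound \eqref{1.6} to obtain $\int_{\R^n}|f(x)||b(x)|\,dx\le\|f\|_{\cM^p_q(\R^n)}$. Subadditivity in an atomic decomposition $g=\sum_k\lm_kb_k$ and passing to the infimum then yield $\|g\|_{\cM^p_q(\R^n)'}\le\|g\|_{\cB^{p'}_{q'}(\R^n)}$.

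For the reverse containment I would invoke Theorem \ref{thm1.2}. First, since Proposition \ref{prop1.1} realises $\cM^p_q(\R^n)$ isometrically as the dual of $\cB^{p'}_{q'}(\R^n)$, the Hahn--Banach theorem gives, for every nonnegative $h\in\cB^{p'}_{q'}(\R^n)$,
\[
\|h\|_{\cB^{p'}_{q'}(\R^n)}=\sup\l\{\int_{\R^n}|f(x)|h(x)\,dx:\ \|f\|_{\cM^p_q(\R^n)}\le 1\r\}=\|h\|_{\cM^p_q(\R^n)'},
\]
where the first equality uses that the Morrey norm is invariant under $f\mapsto|f|$ and under multiplication by unimodular factors. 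Now, given $g\in\cM^p_q(\R^n)'$, set
\[
h_N:=\min(|g|,N)\,\chi_{[-N,N]^n}.
\]
Each $h_N$ is bounded with compact support, hence (after normalisation) a single $(p',q')$-block, so $h_N\in\cB^{p'}_{q'}(\R^n)$. The pointwise inequality $h_N\le|g|$ combined with $\||g|\|_{\cM^p_q(\R^n)'}=\|g\|_{\cM^p_q(\R^n)'}$ and monotonicity of the associate norm gives $\|h_N\|_{\cM^p_q(\R^n)'}\le\|g\|_{\cM^p_q(\R^n)'}$, and by the identification above $\|h_N\|_{\cB^{p'}_{q'}(\R^n)}\le\|g\|_{\cM^p_q(\R^n)'}$ uniformly in $N$. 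Since $h_N\uparrow|g|$, Theorem \ref{thm1.2} produces $|g|\in\cB^{p'}_{q'}(\R^n)$ with $\||g|\|_{\cB^{p'}_{q'}(\R^n)}\le\|g\|_{\cM^p_q(\R^n)'}$; Lemma \ref{lem2.1} then upgrades this to $g\in\cB^{p'}_{q'}(\R^n)$ with the same norm bound, closing both inequalities.

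The main obstacle I anticipate is not the Fatou step itself --- Theorem \ref{thm1.2} is tailor-made for transferring the uniform bound on the truncations $h_N$ to the limit --- but the \emph{isometric} identification in the duality step. The inequality $\|g\|_{(\cB^{p'}_{q'})^*}\le\|g\|_{\cM^p_q(\R^n)}$ follows from the easy direction; the reverse requires producing $(p',q')$-blocks adapted to cubes that nearly attain the Morrey norm of $g$, which is routine but not entirely automatic. Once this is secured, the plan reduces to the truncation-and-Fatou sequence described above.
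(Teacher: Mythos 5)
Your proposal is correct and follows essentially the same route as the paper: the easy containment is Proposition \ref{prop1.1}, and the reverse containment is obtained by truncating, $f_k=\min(f,k)\chi_{(-k,k)^n}$, observing that each truncation is (a multiple of) a block whose $\cB^{p'}_{q'}$-norm is controlled by the associate norm of $f$ via the duality of Proposition \ref{prop1.1}, and then applying the Fatou property of Theorem \ref{thm1.2}. The only cosmetic difference is that the paper reduces to $f\ge 0$ by splitting into real/imaginary and positive/negative parts where you instead pass through $|g|$ and Lemma \ref{lem2.1}; both are fine.
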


\begin{proof}
We see that 
$\cB^{p'}_{q'}(\R^n)\subset\cM^p_q(\R^n)'$
by Proposition \ref{prop1.1}. 
So we shall verify the converse. 
Suppose that a measurable function $f$ satisfies 
\begin{equation}\label{4.1}
\sup\l\{
\l|\int_{\R^n}f(x)g(x)\,dx\r|:\,
\|g\|_{\cM^p_q(\R^n)}\le 1
\r\}\le 1.
\end{equation}
Then we first see that 
$|f(x)|<\8$, (a.e. $x\in\R^n$). 
Splitting $f$ into its real and imaginary parts and 
each of these into its positive and negative parts,
we may assume without loss of generality that $f\ge 0$. 
For $k=1,2,\ldots$, 
set $Q_k:=(-k,k)^n$ and let 
$$
f_k(x):=\min(f(x),k)\chi_{Q_k}(x)
\quad (x \in \R^n).
$$
We notice that 
$f_k\in\cB^{p'}_{q'}(\R^n)$ and 
$\|f_k\|_{\cB^{p'}_{q'}(\R^n)}\le 1$
by Lemma \ref{lem2.1}, 
Proposition \ref{1.1} and 
\eqref{4.1}. 
Since $f_k\uparrow f$ a.e., 
it follows from Theorem \ref{1.2} that 
$f\in\cB^{p'}_{q'}(\R^n)$ and 
$\|f\|_{\cB^{p'}_{q'}(\R^n)}\le 1$.
This proves the theorem.
\end{proof}

By Theorems \ref{thm3.7} and \ref{thm4.1}
(or directly Proposition \ref{prop1.1}),
one sees that 
$$
\cB^{p'}_{q'}(\R^n)'
=
\cM^p_q(\R^n)''
=
\cM^p_q(\R^n).
$$
Furthermore, 
from the fact that 
$\cM^p_q(\R^n)''=\cM^p_q(\R^n)$,
we are able to characterize the predual of block spaces 
following the argument in \cite{BeSh}.

\begin{definition}\label{def4.2}
Let $1<q\le p<\8$. 
The closure in $\cM^p_q(\R^n)$ of the set of 
all finite linear combination of the characteristic functions of sets of finite measure 
is denoted by 
$\wt{\cM}^p_q(\R^n)$. 
\end{definition}

\begin{theorem}\label{thm4.3}
Let $1<q\le p<\8$. Then 
the predual space of 
$\cB^{p'}_{q'}(\R^n)$ is 
$\wt{\cM}^p_q(\R^n)$ 
in the following sense:

If $g\in\cB^{p'}_{q'}(\R^n)$, then 
$\int_{\R^n}f(x)g(x)\,dx$ 
is an element of 
$\wt{\cM}^p_q(\R^n)^{*}$. 
Moreover, for any 
$L\in\wt{\cM}^p_q(\R^n)^{*}$, 
there exists $g\in\cB^{p'}_{q'}(\R^n)$ 
such that 
$$
L(f)=\int_{\R^n}f(x)g(x)\,dx,
\quad
(f\in\wt{\cM}^p_q(\R^n)).
$$
\end{theorem}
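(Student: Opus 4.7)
The plan is to follow the standard Bennett--Sharpley recipe for recovering the predual of a space from its associate space, adapted here to the non--Banach-function-space setting of $\cM^p_q(\R^n)$. The easy direction is immediate from Theorem \ref{thm4.1}: if $g\in\cB^{p'}_{q'}(\R^n)$, then $|\int_{\R^n}f(x)g(x)\,dx|\le\|f\|_{\cM^p_q(\R^n)}\|g\|_{\cB^{p'}_{q'}(\R^n)}$ for every $f\in\wt{\cM}^p_q(\R^n)\subset\cM^p_q(\R^n)$, so $f\mapsto\int fg\,dx$ defines a bounded linear functional on $\wt{\cM}^p_q(\R^n)$.

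For the converse, I would fix $L\in\wt{\cM}^p_q(\R^n)^{*}$ and first observe that a short computation with the definition of the Morrey norm yields $\|\chi_E\|_{\cM^p_q(\R^n)}\le|E|^{1/p}$ whenever $|E|<\8$; in particular every $\chi_E$ lies in $\wt{\cM}^p_q(\R^n)$ and $|E|\to 0$ forces $\|\chi_E\|_{\cM^p_q(\R^n)}\to 0$. For each $k$, the set function $\nu_k(E):=L(\chi_E)$ on the Borel subsets of $Q_k:=(-k,k)^n$ is then $\sigma$-additive by continuity of $L$ and satisfies $|\nu_k(E)|\le\|L\||E|^{1/p}$, so it is a finite signed measure absolutely continuous with respect to Lebesgue measure. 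The Radon--Nikod\'ym theorem yields a density $g_k\in L^1(Q_k)$, and uniqueness forces $g_{k+1}|_{Q_k}=g_k$; gluing produces $g\in L^1_{{\rm loc}}(\R^n)$ with $L(\chi_E)=\int_E g\,dx$ for every $|E|<\8$. By linearity this extends to simple $f$ with finite-measure support, and then by uniform (hence $\cM^p_q(\R^n)$-norm) approximation on each $Q_k$ together with the dominated convergence theorem, to all bounded compactly supported $f$.

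Next I would prove $g\in\cB^{p'}_{q'}(\R^n)$. By Theorem \ref{thm4.1} it suffices to bound $\int h|g|\,dx$ by a constant multiple of $\|L\|$ whenever $h\ge 0$ and $\|h\|_{\cM^p_q(\R^n)}\le 1$. Setting $h_k:=\min(h,k)\chi_{Q_k}\uparrow h$ and writing $P:=\{g>0\}$, $N:=\{g\le 0\}$, each of $h_k\chi_P$ and $h_k\chi_N$ is bounded and compactly supported, hence in $\wt{\cM}^p_q(\R^n)$ with Morrey norm at most $\|h\|_{\cM^p_q(\R^n)}\le 1$; the representation proven in the previous step then gives
$$
\int h_k g^{+}\,dx=L(h_k\chi_P)\le\|L\|,\qquad
\int h_k g^{-}\,dx=-L(h_k\chi_N)\le\|L\|,
$$
and monotone convergence yields $\int h|g|\,dx\le 2\|L\|$. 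Hence $g\in\cB^{p'}_{q'}(\R^n)$ by Theorem \ref{thm4.1}. With this in hand, density of simple integrable functions in $\wt{\cM}^p_q(\R^n)$ together with the now-justified continuity of $f\mapsto\int fg\,dx$ on $\cM^p_q(\R^n)$ extends the representation $L(f)=\int fg\,dx$ from bounded compactly supported $f$ to all of $\wt{\cM}^p_q(\R^n)$.

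The main obstacle is the apparent circularity in this last step: the identity $L(f)=\int fg\,dx$ on $\wt{\cM}^p_q(\R^n)$ would most naturally be proven by invoking continuity of $f\mapsto\int fg\,dx$ in the $\cM^p_q(\R^n)$-norm, which is precisely the assertion $g\in\cB^{p'}_{q'}(\R^n)$ that one is trying to establish. The way out is to anchor the identity first on bounded compactly supported $f$, where only local integrability of $g$ is needed, and then to exploit the Hahn decomposition of $g$ together with the monotone approximation $h_k\uparrow h$ to obtain the required bound on $\int h|g|\,dx$ by testing $L$ against $h_k\chi_P$ and $h_k\chi_N$, thereby certifying $g\in\cB^{p'}_{q'}(\R^n)$ without having to assume this a priori.
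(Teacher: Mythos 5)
Your proof is correct and follows essentially the same route as the paper: reduce via Theorem \ref{thm4.1} to showing $\wt{\cM}^p_q(\R^n)^{*}\subset\cM^p_q(\R^n)'$, produce $g$ by applying the Radon--Nikod\'{y}m theorem to $E\mapsto L(\chi_E)$ locally, certify $g\in\cM^p_q(\R^n)'$ by testing $L$ against truncated monotone approximants, and finish by density of simple functions in $\wt{\cM}^p_q(\R^n)$. The only (harmless) deviations are cosmetic: you exhaust by the cubes $(-k,k)^n$ rather than partitioning into unit cubes, and you split into the sets $\{g>0\}$, $\{g\le 0\}$ instead of multiplying by $\sgn(g)$, which costs a factor of $2$ in the bound but not the conclusion.
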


\begin{proof}
The first assertion is clear. 
So we shall prove that 
$\wt{\cM}^p_q(\R^n)^{*}\subset\cB^{p'}_{q'}(\R^n)$. 
Thanks to Theorem \ref{thm4.1}, 
we need only show 
$$
\wt{\cM}^p_q(\R^n)^{*}
\subset
\cM^p_q(\R^n)'.
$$
Suppose that $L$ belongs to 
$\wt{\cM}^p_q(\R^n)^{*}$.
We shall exhibit a function $g$ in 
$\cM^p_q(\R^n)'$ 
such that we have
\begin{equation}\label{4.2}
L(f)=\int_{\R^n}f(x)g(x)\,dx,
\quad
(f\in\wt{\cM}^p_q(\R^n)).
\end{equation}

If we let 
$Q_i:=i+[0,1)^n$, $i\in\Z^n$, 
then the sequence 
$\{Q_i\}_{i\in\Z^n}$ forms 
disjoint subsets of $\R^n$, 
each of which has measure one and 
whose union is all of $\R^n$. 
For each $i\in\Z^n$, 
let $\cA_i$ denote the Lebesgue measurable subsets of $Q_i$ 
and define a set-function $\lm_i$ on $\cA_i$ by 
$$
\lm_i(A)=L(\chi_{A}),
\quad
(A\in\cA_i).
$$
Notice that 
$\lm_i(A)$ is well-defined for all $A\in\cA_i$ 
because $\chi_{A}$ belongs to 
$\wt{\cM}^p_q(\R^n)$. 

We claim that $\lm_i$ is countably additive on $\cA_i$. 
Indeed, 
let $(A_k)_{k=1}^{\8}$ 
be a sequence of disjoint sets 
from $\cA_i$ and let 
$$
B_l=\bigcup_{k=1}^lA_k,
\quad
(l=1,2,\ldots),
\qquad
A=\bigcup_{k=1}^{\8}A_k
=
\bigcup_{l=1}^{\8}B_l.
$$
It follows from \eqref{1.2} and 
the Lebesgue dominated convergence theorem 
that 
$$
\|\chi_{A}-\chi_{B_l}\|_{\cM^p_q(\R^n)}
\le
\|\chi_{A}-\chi_{B_l}\|_{L^p(Q_i)}
\to 0
\text{ as }l\to\8.
$$
The continuity and linearity of $L$ give 
$$
\lm_i(A)
=
L(\chi_{A})
=
\lim_{l\to\8}L(\chi_{B_l})
=
\lim_{l\to\8}
\sum_{k=1}^l
L(\chi_{A_k})
=
\sum_{k=1}^{\8}\lm_i(A_k),
$$
which establishes the claim. 

Since 
$|\lm_i(A)|\le\|L\|_{\wt{\cM}^p_q(\R^n)^{*}}$ 
for all $A\in\cA_i$ and 
$\lm_i(A)=0$ for all 
$A\in\cA_i$ such that $|A|=0$, 
by the Radon-Nikodym theorem, 
there is a unique measurable function 
$g_i$ on $Q_i$ such that 
$$
L(\chi_{A})=\lm_i(A)
=
\int_{\R^n}\chi_{A}(x)g_i(x)\,dx,
\quad
(A\in\cA_i).
$$
Since the sets $Q_i$ are disjoint 
we may define a function $g$ on all of $\R^n$ 
by setting $g=g_i$ on each $Q_i$. 
Clearly, 
\begin{equation}\label{4.3}
L(\chi_{E})
=
\int_{\R^n}\chi_{E}(x)g(x)\,dx
\end{equation}
for all characteristic functions of sets of finite measure $\chi_{E}$.

We first show that 
$g$ belongs to $\cM^p_q(\R^n)'$. 
Choose and fix $f$ in $\cM^p_q(\R^n)$. 
Let, for $l=1,2,\ldots$, 
$$
f_l(x)
:=
\sum_{k=1}^{4^l}
\frac{k}{2^l}\chi_{F_{k,l}}(x),
$$
where 
$$
F_{k,l}
:=
\l\{x\in\R^n:\,
|x|<2^l,\,
\frac{k}{2^l}\le|f(x)|<\frac{k+1}{2^l}
\r\}.
$$
If we suppose for the moment that 
$g$ is real-valued, then 
$f_l\cdot\sgn(g)$ becomes 
a finite linear combination of characteristic functions of sets of finite measure. 
Hence, we may apply \eqref{4.3} and use the linearity of $L$ to obtain
$$
\int_{\R^n}f_l(x)|g(x)|\,dx
=
L(f_l\cdot\sgn(g))
\le
\|L\|_{\wt{\cM}^p_q(\R^n)^{*}}
\|f_l\|_{\cM^p_q(\R^n)}.
$$
Letting $l\to\8$, we have 
$$
\int_{\R^n}|f(x)g(x)|\,dx
\le
\|L\|_{\wt{\cM}^p_q(\R^n)^{*}}
\|f\|_{\cM^p_q(\R^n)}
$$
from the monotone convergence theorem and 
the Fatou property of Morrey norm.
This means that 
$g$ belongs to $\cM^p_q(\R^n)'$.
If $g$ is complex-valued, 
then the same argument applied separately to the real and imaginary parts of $g$ 
shows that each of these is 
in $\cM^p_q(\R^n)'$ and hence that 
$g$ again belongs to $\cM^p_q(\R^n)'$.

Write, 
for a function $f$ which can be written as
a finite linear combination of characteristic functions of sets of finite measure, 
$$
L(f)
=
\int_{\R^n}f(x)g(x)\,dx
$$
and observe the continuity of both sides 
on $\cM^p_q(\R^n)$. Then 
we conclude that \eqref{4.2} holds.
This complete the proof of the theorem.
\end{proof}

\begin{remark}\label{rem4.4}
Let $1<q\le p<\8$. 
Let $\cC_0$ be the class of continuous functions 
with compact support in $\R^n$. 
The Zorko space $\cZ^p_q(\R^n)$ 
is defined by the closure in 
$\cM^p_q(\R^n)$ of $\cC_0$. 
In \cite{AdXi2}, 
Adams and Xiao pointed out 
(without detailed proof) 
$\cZ^p_q(\R^n)$ 
is the predual of 
$\cB^{p'}_{q'}(\R^n)$. 
In \cite{IzSaYa}, 
Izumi, Sato and Yabuta 
gave a detailed proof of this fact on the unit circle. 
The idea used in the proof of Theorem \ref{thm1.2} comes from their nice paper. 
\end{remark}

We shall use $\{E_k\}_{k=1}^{\8}$ 
to denote an arbitrary sequence of measurable subsets of $\R^n$. 
We shall write $E_k\to\0$ a.e., if 
the characteristic functions $\chi_{E_k}$ 
converge to $0$ pointwise a.e.
Notice that the sets $E_k$ are not required to have finite measure.

\begin{definition}\label{def4.5}
Let $1<q\le p<\8$. A function $f$ 
in $\cM^p_q(\R^n)$ is said to have 
\lq\lq absolutely continuous norm" 
in $\cM^p_q(\R^n)$ if 
$\|f\chi_{E_k}\|_{\cM^p_q(\R^n)}\to 0$ 
for every sequence 
$\{E_k\}_{k=1}^{\8}$ 
satisfying 
$E_k\to\0$ a.e. 
The set of all functions 
in $\cM^p_q(\R^n)$ of absolutely continuous norm 
is denoted by 
$\wht{\cM}^p_q(\R^n)$.
\end{definition}

\begin{theorem}\label{thm4.6}
Let $1<q\le p<\8$. Then 
$$
\wt{\cM}^p_q(\R^n)
=
\wht{\cM}^p_q(\R^n).
$$
\end{theorem}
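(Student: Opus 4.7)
The plan is to verify both inclusions directly. The case $p=q$ is classical ($L^p$ itself), so assume $q<p$.

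For $\wt{\cM}^p_q(\R^n)\subset\wht{\cM}^p_q(\R^n)$, first note that $\wht{\cM}^p_q(\R^n)$ is closed in $\cM^p_q(\R^n)$: if $f_k\to f$ in Morrey norm with $f_k\in\wht{\cM}^p_q(\R^n)$ and $E_j\to\0$ a.e., the triangle inequality
$$\|f\chi_{E_j}\|_{\cM^p_q(\R^n)}\le\|f-f_k\|_{\cM^p_q(\R^n)}+\|f_k\chi_{E_j}\|_{\cM^p_q(\R^n)}$$
together with an $\eps/2$ argument (choose $k$ first, then $j$) forces the left side to $0$. Hence it suffices to prove $\chi_A\in\wht{\cM}^p_q(\R^n)$ whenever $|A|<\8$, since finite linear combinations then lie in $\wht{\cM}^p_q(\R^n)$ and closedness absorbs the closure defining $\wt{\cM}^p_q(\R^n)$. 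To that end fix such $A$ and $E_j\to\0$ a.e., and put $F_j:=A\cap E_j$; dominated convergence gives $|F_j|\to 0$. For a cube $Q\in\cQ$ and a threshold $\dl>0$, combining $|F_j\cap Q|\le|Q|$ (for $|Q|\le\dl$) with $|F_j\cap Q|\le|F_j|$ (for $|Q|>\dl$, where the monotonicity in $|Q|$ needs $\tfrac1p-\tfrac1q<0$) gives
$$|Q|^{\frac1p-\frac1q}|F_j\cap Q|^{\frac1q}\le\max\!\l(\dl^{\frac1p},\,\dl^{\frac1p-\frac1q}|F_j|^{\frac1q}\r).$$
Choosing $\dl=\eps^p$ and then $j$ so large that $|F_j|<\eps^p$ makes both entries of the max at most $\eps$, so $\|\chi_{F_j}\|_{\cM^p_q(\R^n)}\to 0$.

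For the reverse inclusion $\wht{\cM}^p_q(\R^n)\subset\wt{\cM}^p_q(\R^n)$, given $f\in\wht{\cM}^p_q(\R^n)$ set $Q_k:=(-k,k)^n$ and $f_k:=\min(|f|,k)\sgn(f)\chi_{Q_k}$. Then $|f-f_k|\le|f|\chi_{E_k}$ with $E_k:=\{|f|>k\}\cup(\R^n\setminus Q_k)$ and $E_k\to\0$ a.e., so absolute continuity yields $\|f-f_k\|_{\cM^p_q(\R^n)}\to 0$. Each $f_k$ is bounded by $k$ and supported in $Q_k$; a standard level-set approximation produces simple functions $s_n$ with $|f_k-s_n|\le\tfrac1n\chi_{Q_k}$ and with level sets of finite measure, whence $\|f_k-s_n\|_{\cM^p_q(\R^n)}\le\tfrac1n|Q_k|^{1/p}\to 0$. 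Therefore $f\in\wt{\cM}^p_q(\R^n)$.

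The main obstacle is the estimate $\|\chi_{F_j}\|_{\cM^p_q(\R^n)}\to 0$: neither $|F_j\cap Q|\le|F_j|$ nor $|F_j\cap Q|\le|Q|$ alone controls the supremum over all cubes, and the resolution hinges on splitting at a well-chosen threshold together with the sign of $\tfrac1p-\tfrac1q$.
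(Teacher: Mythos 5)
Your proof is correct, but it takes a more self-contained route than the paper. The paper disposes of the whole theorem by invoking the abstract result \cite[Theorem 3.13]{BeSh} (the closure of the simple functions equals the subspace of absolutely continuous norm as soon as every $\chi_E$ with $|E|<\8$ has absolutely continuous norm), and then checks that single condition in one line via the embedding \eqref{1.2}: $\|\chi_E\chi_{F_k}\|_{\cM^p_q(\R^n)}\le\|\chi_E\chi_{F_k}\|_{L^p(\R^n)}=|E\cap F_k|^{1/p}\to 0$ by dominated convergence. You instead reprove both inclusions from scratch: the closedness of $\wht{\cM}^p_q(\R^n)$ plus the estimate on $\chi_{F_j}$ for one direction, and the truncation $f_k=\min(|f|,k)\sgn(f)\chi_{Q_k}$ with $|f-f_k|\le|f|\chi_{E_k}$ followed by uniform simple-function approximation for the other; this is essentially a hands-on replay of the Bennett--Sharpley argument in the Morrey setting. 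What your version buys is independence from the Banach-function-space axioms --- a point worth caring about here, since the paper itself notes in Section~\ref{sec3} that $\cM^p_q(\R^n)$ fails (P5), so the citation of \cite[Theorem 3.13]{BeSh} tacitly requires checking that its proof survives without that axiom; your argument sidesteps the issue. What the paper's version buys is brevity, and in particular the step you single out as ``the main obstacle'' is not one: the threshold splitting at $\dl$ is unnecessary because
$$
|Q|^{\frac1p-\frac1q}|F_j\cap Q|^{\frac1q}
=
|Q|^{\frac1p-\frac1q}\,|F_j\cap Q|^{\frac1q-\frac1p}\,|F_j\cap Q|^{\frac1p}
\le
|F_j|^{\frac1p},
$$
which is exactly the $L^p$-embedding bound the paper uses and works uniformly in $Q$ (also when $p=q$, so your reduction to $q<p$ is not needed either).
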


\begin{proof}
By \cite[Theorem 3.13]{BeSh}, 
we need only verify that 
the characteristic function $\chi_{E}$ has absolutely continuous norm 
for every set $E$ of finite measure.
Let $\{F_k\}_{k=1}^{\8}$ 
be an arbitrary sequence for which 
$F_k\to\0$ a.e. Then 
it follows from \eqref{1.2} and 
the Lebesgue dominated convergence theorem 
that 
$$
\|\chi_{E}\chi_{F_k}\|_{\cM^p_q(\R^n)}
\le
\|\chi_{E}\chi_{F_k}\|_{L^p(\R^n)}
\to 0
\text{ as }k\to\8,
$$
which proves the theorem.
\end{proof}

\section{Miscellaneous}\label{sec5}

\begin{example}\label{exm5.1}
Let $1<q<p<\8$. We show that 
$\wt{\cM}^p_q(\R^n)$ and 
$\cM^p_q(\R^n)$ are different spaces. 
In fact, 
the former is narrower than the latter.
We prove this by giving an example when $n=1$; 
other cases can be dealt similarly.

Set 
$$
E
:=
\bigcup_{k=1}^{\8}
(k-1+k^{\frac{p}{p-q}},\,k+k^{\frac{p}{p-q}}).
$$
Then we see that 
$\chi_{E}$ belongs to 
$\cM^p_q(\R^n)$
but does not belong to 
$\wt{\cM}^p_q(\R^n)$.
\end{example}

\begin{example}\label{exm5.2}
Let $1<q<p<\8$ and 
$L:\,\cM^p_q(\R^n)\to\R$ 
be a bounded linear functional.
Then in view of the embedding 
$L^p(\R^n)\hookrightarrow\cM^p_q(\R^n)$,
one has a function 
$g\in L^{p'}(\R^n)$ 
such that
$$
L(f)=\int_{\R^n}f(x)g(x)\,dx,
\quad
(f\in L^p(\R^n)).
$$
However, 
it can happen that $L$ is not zero 
even when $g\equiv 0$;
One can show this by an example.

Recall the set $E$ defined in 
Example \ref{exm5.1}. Set, 
for $k=1,2,\ldots$, 
$$
I_k
:=
(k-1+k^{\frac{p}{p-q}},\,k+k^{\frac{p}{p-q}}).
$$
Then, 
$$
\lim_{k\to\8}
\int_{I_k}\chi_{E}(x)\,dx=1.
$$
With this in mind, 
let us define a closed subspace $H$ by
$$
H
:=
\l\{f\in\cM^p_q(\R):\,
\lim_{k\to\8}
\int_{I_k}f(x)\,dx
\text{ exists }
\r\}.
$$
Then, 
from the definition of the norm, 
we have 
$$
\lim_{k\to\8}
\l|\int_{I_k}f(x)\,dx\r|
\le
\|f\|_{\cM^p_q(\R)}.
$$
Consequently, 
it follows from the Hahn-Banach theorem that 
the mapping
$$
f\in H
\mapsto 
\lim_{k\to\8}\int_{I_k}f(x)\,dx
\in\R
$$
extends to a continuous linear functional $L$.
Observe that 
$L(\chi_{E})=1$ 
and hence 
$L\ne 0$.
Meanwhile, 
$L$ annihilates any compactly supported function in 
$\cM^p_q(\R)$ 
because such a function belongs to $H$.
Therefore, if one considers a function $g$ satisfying
$$
L(f)=\int_{\R^n}f(x)g(x)\,dx
$$
for all $f \in L^p(\R^n)$,
then one obtains 
$g\equiv 0$ 
by virtue of the Lebesgue dominated convergence theorem. 
\end{example}

We end this paper with the following proposition. 

\begin{proposition}\label{prop5.3}
Let $1<q\le p<\8$. Suppose that 
$f\in L^{q'}(\R^n)$ 
has compact support. Then
there exists a finite sequence
$\{\lm_j\}_{j=1}^{N}$
of nonnegative numbers and
$\{b_j\}_{j=1}^{N}$
of $(p',q')$-blocks such that 
$$
f=\sum_{j=1}^{N}\lm_jb_j
\text{ and }
\sum_{j=1}^{N}\lm_j
\le 8
\|f\|_{\cB^{p'}_{q'}(\R^n)}.
$$
\end{proposition}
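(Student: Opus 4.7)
The plan is to begin from an almost optimal infinite block decomposition, truncate it to finitely many terms, and repackage the resulting $L^{q'}$-remainder as a single additional $(p',q')$-block, exploiting the compact support of $f$. Set $\mu:=\|f\|_{\cB^{p'}_{q'}(\R^n)}$ and fix a cube $R$ with $\supp f\subset R$. By the definition of the block norm I would choose a decomposition $f=\sum_{k=1}^{\8}\lm_k b_k$ with $\sum_k\lm_k\le 2\mu$ and each $b_k$ a $(p',q')$-block supported in some cube $Q_k$. Replacing $b_k$ by $b_k\chi_R$ keeps it a $(p',q')$-block: the smallest cube $Q_k'$ containing $Q_k\cap R$ has $|Q_k'|\le|Q_k|$, and since $1/p-1/q<0$ this gives $\|b_k\chi_R\|_{L^{q'}}\le|Q_k|^{1/p-1/q}\le|Q_k'|^{1/p-1/q}$. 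So I may assume all blocks are supported in subcubes of $R$.

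Next I would truncate: for a large $N$, set $f_N:=\sum_{k=1}^N\lm_k b_k$ and $r_N:=f-f_N$. Since $f$ and $f_N$ both lie in $L^{q'}(R)$, so does $r_N$, and it is supported in $R$. Consequently $r_N$ is itself a constant multiple of a $(p',q')$-block on $R$: write $r_N=\lm_{N+1}b_{N+1}$ with
\[
\lm_{N+1}=\|r_N\|_{L^{q'}(R)}\,|R|^{1/q-1/p}.
\]
This produces a finite decomposition $f=\sum_{j=1}^{N+1}\lm_j b_j$ whose total coefficient is at most $2\mu+\lm_{N+1}$.

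The main obstacle is to bound $\lm_{N+1}\le 6\mu$ uniformly in $f$. This is not automatic: the series $\sum_k\lm_k b_k$ converges in $\cB^{p'}_{q'}$ but typically not in $L^{q'}(R)$, because blocks on tiny cubes may carry arbitrarily large $L^{q'}$-norm. To control $\|r_N\|_{L^{q'}(R)}$ I would split the tail at a scale $\delta$, keeping the blocks with $|Q_k|\ge\delta$ (whose $L^{q'}(R)$-mass is at most $2\mu\,\delta^{1/p-1/q}$) and absorbing those with $|Q_k|<\delta$; since the latter sum equals $f$ minus the retained part, its $L^{q'}(R)$-norm is at most $\|f\|_{L^{q'}}+2\mu\,\delta^{1/p-1/q}$. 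Balancing $\delta$ and using the bound $\int_Q|f|\le\mu|Q|^{1/p}$ that follows from the duality $\cB^{p'}_{q'}(\R^n)=\cM^p_q(\R^n)'$ in Theorem~\ref{thm4.1}---possibly together with extracting one further single-block piece from $f$ so as to replace the a priori large factor $\|f\|_{L^{q'}}\,|R|^{1/q-1/p}$ by a multiple of $\mu$---is the principal technical difficulty, and producing the clean dimension-free constant $8$ is the delicate bookkeeping step.
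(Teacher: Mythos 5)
Your overall strategy---truncate a near-optimal infinite decomposition and repackage the $L^{q'}$ remainder, supported in a fixed cube, as one extra block---is exactly the paper's, and your preliminary reduction (restricting all blocks to a cube $R\supset\supp f$) also matches. But you have correctly located, and then not closed, the one genuinely nontrivial step: bounding the $L^{q'}(R)$ norm of the tail. Your proposed fix (split the tail at a cube scale $\delta$ and balance) does not work as stated: the small-cube part is only controlled by $\|f\|_{L^{q'}}+2\mu\,\delta^{1/p-1/q}$, and the quantity $\|f\|_{L^{q'}(R)}|R|^{1/q-1/p}$ is in general much larger than $\mu=\|f\|_{\cB^{p'}_{q'}(\R^n)}$ (it dominates $\mu$ but admits no reverse inequality), so the coefficient you would assign to the repackaged block is not a multiple of $\mu$. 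The parenthetical ``extract one further single-block piece'' is left vague and does not repair this.

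The missing idea is a positivity reduction, and it is the reason the constant is $8=4\cdot2$ rather than $2$. Split $f$ into the positive and negative parts of its real and imaginary parts; by the monotonicity of the block norm implicit in Lemma \ref{lem2.1}, each of the four pieces has block norm at most that of $f$. For a nonnegative $f$, the device in the proof of Lemma \ref{lem2.1} (pass from $f=\sum_k\lm_k b_k$ to $g=\sum_k|\lm_k|\,|b_k|\ge f$ and then to $f=\sum_k|\lm_k|\,(f/g)|b_k|$) converts a near-optimal decomposition into one with \emph{nonnegative} blocks and the same coefficient sum, say at most $\tfrac32\|f\|_{\cB^{p'}_{q'}(\R^n)}$. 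Now the tail $\sum_{j\ge N}\Lm_jB_j=f-\sum_{j<N}\Lm_jB_j$ is nonnegative, decreases pointwise to $0$, and is dominated by $f\in L^{q'}$, so its $L^{q'}(Q_0)$ norm tends to $0$ by dominated convergence; hence the extra coefficient $|Q_0|^{1/q-1/p}\bigl\|\sum_{j\ge N}\Lm_jB_j\bigr\|_{L^{q'}}$ can be made smaller than $\tfrac12\|f\|_{\cB^{p'}_{q'}(\R^n)}$ for $N$ large, giving the constant $2$ per piece and $8$ overall. Without this reduction the tail of a signed decomposition genuinely need not converge in $L^{q'}$, as you yourself observed, so the step you defer is not bookkeeping but the heart of the proof.
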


\begin{proof}
The proof will be complete once we show that
there exists a finite sequence
$\{\lm_j\}_{j=1}^{N}$
of nonnegative numbers and 
$\{b_j\}_{j=1}^{N}$
of $(p',q')$-blocks such that
$$
f=\sum_{j=1}^{N}\lm_jb_j
\text{ and }
\sum_{j=1}^{N}\lm_j
\le 2
\|f\|_{\cB^{p'}_{q'}(\R^n)}
$$
when $f$ is positive.

We know that, 
as is illustrated by the proof of Lemma \ref{lem2.1}, 
there exist an infinite sequence 
$\{\Lm_j\}_{j=1}^{\8}$
of nonnegative numbers and  an infinite sequence
$\{B_j\}_{j=1}^{\8}$
of nonnegative $(p',q')$-blocks 
such that
$$
f=\sum_{j=1}^{\8}\Lm_jB_j
\text{ and }
\sum_{j=1}^{\8}\Lm_j
\le
\frac32\|f\|_{\cB^{p'}_{q'}(\R^n)}.
$$
Suppose that the support of $f$ is engulfed by a large cube $Q_0$.
By using the characteristic functions, 
we may as well assume that $B_j$ is supported on $Q_0$.
Then we have
$$
f=\sum_{j=1}^{N-1}\Lm_jB_j
+
\sum_{j=N}^{\8}\Lm_jB_j
$$
and
\begin{align*}
\sum_{j=1}^{N-1}\Lm_j
+
|Q_0|^{\frac1q-\frac1p}
\l\|\sum_{j=N}^{\8}\Lm_jB_j\r\|_{L^{q'}(\R^n)}
\le
\frac32\|f\|_{\cB^{p'}_{q'}(\R^n)}
+
|Q_0|^{\frac1q-\frac1p}
\l\|f-\sum_{j=1}^{N-1}\Lm_jB_j\r\|_{L^{q'}(\R^n)}.
\end{align*}
By the monotone convergence theorem,
we see that
$$
\sum_{j=1}^{N-1}\Lm_j
+
|Q_0|^{\frac1q-\frac1p}
\l\|\sum_{j=N}^{\8}\Lm_jB_j\r\|_{L^{q'}(\R^n)}
\le 
2\|f\|_{\cB^{p'}_{q'}(\R^n)}
$$
as long as $N$ is sufficient large.
Thus, if we define 
$\lm_1,\lm_2,\ldots,\lm_N$
and
$b_1,b_2,\ldots,b_N$ 
as
$$
\lm_j=\Lm_j,\,b_j=B_j 
\text{ when }j=1,2,\ldots,N-1,
$$
and
$$
\Lm_N
=
|Q_0|^{\frac1q-\frac1p}
\l\|\sum_{j=N}^{\8}\Lm_jB_j\r\|_{L^{q'}(\R^n)},
\quad
b_N
=
\begin{cases}
\dfrac{1}{\lm_N}
\sum_{j=N}^{\8}\Lm_jB_j
&\,\lm_N\ne 0
\\
0&\,\lm_N=0,
\end{cases}
$$
then we obtain the desired decomposition.
\end{proof}

\end{document}